\newtheorem{thm}{Theorem}[section]
 \newtheorem{cor}[thm]{Corollary}
 \newtheorem{prop}[thm]{Proposition}
 \theoremstyle{definition}
 \theoremstyle{remark}
 \numberwithin{equation}{section}
\keywords{Almost paracontact structure, almost paracomplex structure, Riemannian metric, cone, hyperbolic extension}
\subjclass[2010]{Primary 53C15, 53C17; Secondary 53C25, 53C42}
\newcommand{\ie}{i.e.\ }
\newcommand{\f}{\phi}
\newcommand{\tg}{\tilde{g}}
\newcommand{\n}{\nabla}
\newcommand{\M}{(\mathcal{M},\allowbreak{}\f,\allowbreak{}\xi,\allowbreak{}\eta,g)}
\newcommand{\I}{\mathcal{I}}
\newcommand{\W}{\mathcal{W}}
\newcommand{\R}{\mathbb R}
\newcommand{\X}{\mathfrak X}
\newcommand{\F}{\mathcal{F}}
\newcommand{\HH}{\mathcal{H}}
\newcommand{\MM}{\mathcal{M}}
\newcommand{\NN}{\mathcal{N}}
\newcommand{\tr}{{\rm tr}}
\newcommand{\ta}{\theta}
\newcommand{\om}{\omega}
\newcommand{\lm}{\lambda}
\newcommand{\al}{\alpha}
\newcommand{\D}{\mathrm{d}}
\newcommand{\ddt}{\tfrac{\D}{\D t}}
\newcommand{\A}{\allowbreak{}}
\newcommand{\s}{\sharp}
\newcommand{\thmref}[1]{Theorem~\ref{#1}}
\title
[Almost paracontact almost paracomplex Riemannian manifolds as \ldots ]
{Almost paracontact almost paracomplex Riemannian manifolds as extensions \\ of 2-dimensional space-forms
}
\author[M. Manev]{Mancho  Manev}
\author[V. Tavkova]{Veselina  Tavkova}
\address[MM1, VT]{Department of Algebra and Geometry, Faculty of Mathematics and Informatics,
University of Plovdiv Paisii Hilendarski, 24, Tzar Asen St, 4000 Plovdiv,
Bulgaria}
\email{mmanev@uni-plovdiv.bg}
\email{vtavkova@uni-plovdiv.bg}
\address[MM2]{
Department of Medical Informatics, Biostatistics and E-Learning,
Faculty of Public Health, Medical University of Plovdiv,
15A, Vasil Aprilov Blvd, 4002 Plovdiv,
Bulgaria
}
\begin{document}

%


\begin{abstract}
Almost pa\-ra\-con\-tact Riemannian manifolds of the lowest dimension are studied, whose paracontact distributions are equipped with an almost paracomplex structure.
These manifolds are constructed as a product of a real line and a 2-dimensional Riemannian space-form.
Their metric is obtained in two ways: as a cone metric and as a hyperbolic extension of the metric of the underlying paracomplex 2-manifold. The resulting manifolds are studied and characterised in terms of the
classification used and their curvature properties.

\end{abstract}

\maketitle

\section*{\textbf{Introduction}}\label{sec-intro}
 \vglue-10pt
 \indent

In 1976, I.\,Sato \cite{Sato76}  introduced  the concept of an almost paracontact structure compatible with a Riemannian metric as an analogue of an almost contact Riemannian manifold.
The study of the differential geometry of these manifolds began with \cite{AdatMiya77}, \cite{Sato77}, \cite{Sato78} by I.\,Sato, T.\,Adati and T.\,Miyazawa.
After that, in \cite{Sa80}, S. Sasaki defined the notion of an almost
paracontact Riemannian manifold of type $(p,q)$, where $p$ and $q$ are the multiples of the eigenvalues $+1$ and $-1$ of the paracontact endomorphism, respectively.
It also has a simple eigenvalue of 0.

In \cite{ManSta01}, M.\,Manev and M.\,Staikova gave a classification of almost paracontact Riemannian manifolds $(\MM,\f,\xi,\eta,g)$ of type $(n,n)$.
The dimension of $\MM$ is $2n+1$ and the induced almost product structure $P$ of $\f$
on the paracontact distribution $\ker(\eta)$ is traceless, \ie  $P$ is an almost paracomplex structure.
Because of this, the present authors called them
\emph{almost paracontact almost paracomplex Riemannian  manifolds} in  \cite{ManVes18}
and continued their study together with S.\,Ivanov and H.\,Manev (e.g. \cite{IvMan2, ManVes18, ManVes2}).

In the present paper, we study the geometry of almost paracontact almost paracomplex Riemannian manifolds of the lowest dimension 3. In Section~\ref{sect-prel}, we recall some necessary facts about the studied manifolds.
In Section~\ref{sect-C} and Section~\ref{sect-S}, we use two different approaches to construct a manifold of the studied type as a
product of a real line and a 2-dimensional manifold. 
In the first case, $g$ is the cone metric, and in a second one, $g$
is the so-called hyperbolic extension, introduced in \cite{IvMan2}, where the underlying manifold is paraholomorphic paracomplex Riemannian.
Our goal in the present work is to study the basic curvature properties  of the
resulting manifolds.


\section{Preliminaries}\label{sect-prel}

\subsection{Almost paracontact almost paracomplex Riemannian manifolds}


Let $(\MM,\f,\allowbreak{}\xi,\eta, g)$ be an \emph{almost paracontact almost paracomplex Riemannian manifold} (abbr. \emph{apapR manifold}).
This means that $\MM$ is a  $(2n+1)$-dimensional differentiable manifold,
$\f$ is a paracontact endomorphism   of the tangent bundle $T\MM$, $\xi$ is a characteristic vector field and  $\eta$ is its dual 1-form, and $g$ is a compatible Riemannian metric, such that:
\begin{equation}\label{str}
\begin{array}{c}
\f^2 = \I - \eta \otimes \xi,\quad \eta(\xi)=1,\quad
\eta\circ\f=0,\quad \f\xi = 0,\quad \tr\f = 0,\\[4pt]
g(\f x, \f y) = g(x,y) - \eta(x)\eta(y),
\end{array}
\end{equation}
where $\I$ denotes the identity on $T\MM$ \cite{Sato76}, \cite{ManSta01}.

Here and further $x$, $y$, $z$, $w$ will stand for arbitrary
elements of the Lie algebra $\X(\MM)$ of tangent vector fields on $\MM$ or vectors in the tangent space $T_p\MM$ at $p\in \MM$.

The associated metric $\tg$ of $g$ on $\M$ is determined  by
$\tg(x,y)=g(x,\f y)+\eta(x)\eta(y)$. In \cite{ManSta01}, it is shown  that $\tg$ is a compatible metric
with $(\MM,\f,\xi,\eta)$ as $g$, but $\tg$ is a pseudo-Riemannian metric of signature $(n + 1,
n)$.

The fundamental  tensor $F$ of type  $(0,3)$ on $(\MM,\f,\xi,\eta,g)$ is  defined by
\begin{equation*}\label{F=nfi}
F(x,y,z)=g\bigl( \left( \nabla_x \f \right)y,z\bigr),
\end{equation*}
where $\nabla $ is the Levi-Civita connection of $g$.
The basic properties of $F$ with respect to the structure are the following:
\begin{equation*}\label{F-prop}
\begin{array}{l}
F(x,y,z)=F(x,z,y)\\[4pt]
\phantom{F(x,y,z)}=-F(x,\f y,\f z)+\eta(y)F(x,\xi,z)
+\eta(z)F(x,y,\xi).
\end{array}
\end{equation*}

The Lee forms of $(\MM,\f,\xi,\eta,g)$ are the following 1-forms
 associated with $F$:
 \begin{equation}\label{t}
\theta(z)=g^{ij}F(e_i,e_j,z),\quad
\theta^*(z)=g^{ij}F(e_i,\f e_j,z), \quad \omega(z)=F(\xi,\xi,z),
\end{equation}
where $g^{ij}$  are the components of the inverse
matrix of $g$ with respect to a basis
$\left\{\xi;e_i\right\}$ $(i=1,2,\dots,2n)$ of $T_p\MM$ at an arbitrary point $p\in \MM$.

A  classification of apapR manifolds is made in \cite{ManSta01}.
It consists of 11 basic
classes $\F_1$, $\F_2$, $\dots$, $\F_{11}$ and it is made with respect to
 $F$.
The components $F_{i}$ of $F$ corresponding to $\F_{i}$ are determined in \cite{ManVes18}.
Namely,  $\M$ belongs to $\F_{i}$ $(i\in\{1,2,\dots,11\})$
if and only if the equality $F=F_i$ is valid.
Moreover,
$\M\in\F_i\oplus\F_j\oplus\cdots$ if and only if
the following condition is satisfied $F=F_i+F_j+\cdots$.

In the present  work, we consider the case of the lowest dimension of  $(\MM,\f,\xi,\eta,g)$, i.e. $\dim{\MM}=3$.
Let
$\{e_0, e_1, e_2\}$ be a $\f$-basis of $T_p\MM$, which satisfies the following
conditions:
\begin{equation}\label{fbasis}
\begin{array}{ll}
\f e_0=0,\quad \f e_1=e_{2},\quad \f e_{2}= e_1,\quad &\xi=
e_0,\quad \\[4pt]
\eta(e_0)=1,\quad \eta(e_1)=\eta(e_{2})=0,\qquad &g(e_i,e_j)=\delta_{ij},\; i,j\in\{0,1,2\}.
\end{array}
\end{equation}

The components  of $F$, $\ta$, $\ta^*$, $\om$ with respect to $\left\{e_0,e_1,e_2\right\}$ are denoted by ${F_{ijk}=F(e_i,e_j,e_k)}$, ${\ta_k=\ta(e_k)}$, ${\ta^*_k=\ta^*(e_k)}$ and ${\om_k=\om(e_k)}$, respectively.
According to \cite{ManVes18}, we have the following:
\begin{equation}\label{t3}
\begin{array}{c}
	\begin{array}{ll}
		\ta_0=F_{110}+F_{220},\quad & \ta_1=F_{111}=-F_{122}=-\ta^*_2,\\[4pt]
		\ta^*_0=F_{120}+F_{210}, \quad &\ta_2=F_{222}=-F_{211}=-\ta^*_1,\\[4pt]
	\end{array}\\
	\begin{array}{lll}
		\om_0=0,  \qquad & \om_1=F_{001},\qquad & \om_2=F_{002}.
	\end{array}
\end{array}
\end{equation}
Then, if   $F^s$, $s\in\{1,2,\dots, 11\}$, are the components  of $F$ 
in  the corresponding basic classes $\F_s$  and  $x=x^ie_i$, $y=y^ie_i$, $z=z^ie_i$ are  arbitrary vectors in
$T_p\MM$, we  have the following: \cite{ManVes18}
\begin{equation}\label{Fi3}
\begin{array}{l}
F^{1}(x,y,z)=\left(x^1\ta_1-x^2\ta_2\right)\left(y^1z^1-y^2z^2\right); \\[4pt]
F^{2}(x,y,z)=F^{3}(x,y,z)=0;
\\
F^{4}(x,y,z)=\frac{1}{2}\ta_0\bigl\{x^1\left(y^0z^1+y^1z^0\right)
+x^2\left(y^0z^2+y^2z^0\right)\bigr\};\\[4pt]
F^{5}(x,y,z)=\frac{1}{2}\ta^*_0\bigl\{x^1\left(y^0z^2+y^2z^0\right)
+x^2\left(y^0z^1+y^1z^0\right)\bigr\};\\[4pt]
F^{6}(x,y,z)=F^{7}(x,y,z)=0;\\[4pt]
F^{8}(x,y,z)=\lm\bigl\{x^1\left(y^0z^1+y^1z^0\right)
-x^2\left(y^0z^2+y^2z^0\right)\bigr\},\\[4pt]
\hspace{38pt} \lm=F_{110}=-F_{220}
;\\[4pt]
F^{9}(x,y,z)=\mu\bigl\{x^1\left(y^0z^2+y^2z^0\right)
-x^2\left(y^0z^1+y^1z^0\right)\bigr\},\\[4pt]
\hspace{38pt} \mu=F_{120}=-F_{210}
;\\[4pt]
F^{10}(x,y,z)=\nu x^0\left(y^1z^1-y^2z^2\right),\quad
\nu=F_{011}=-F_{022}
;\\[4pt]
F^{11}(x,y,z)=x^0\bigl\{\om_{1}\left(y^0z^1+y^1z^0\right)
+\om_{2}\left(y^0z^2+y^2z^0\right)\bigr\}.
\end{array}
\end{equation}

By virtue of the latter equations,  the 3-dimensional manifolds of the considered type can belong only to
the basic classes
$\F_1$, $ \F_4$, $\F_5$,  $\F_8$, $\F_9$, $\F_{10}$,  $\F_{11}$ and their direct sums \cite{ManVes18}.


The curvature $(1,3)$-tensor $R$ of $\nabla$ is defined as usually by $R=\left[\n,\n\right]-\n_{[\,\ ]}$.
The corresponding $(0,4)$-tensor is denoted by the same letter and it is determined  by  $R(x,y,z,w)=g(R(x,y)z,w)$.

The Ricci tensor $\rho$ and the scalar curvature $\tau$ for $R$ as well as
their associated quantities are defined respectively by:
\begin{equation}\label{rhotau}
\begin{array}{ll}
    \rho(y,z)=g^{ij}R(e_i,y,z,e_j),\qquad &
    \tau=g^{ij}\rho(e_i,e_j),\\[4pt]
    \rho^*(y,z)=g^{ij}R(e_i,y,z,\f e_j),\qquad &
    \tau^*=g^{ij}\rho^*(e_i,e_j).
\end{array}
\end{equation}

The following tensors are essential curvature-like tensors of type $(1,3)$:
\begin{equation}\label{pi12}
\begin{array}{l}
    \pi_1(x,y)z=g(y,z)x-g(x,z)y,\\[4pt]
    \pi_2(x,y)z=g(y,\f z)\f x-g(x, \f z)\f y. 
\end{array}
\end{equation}
Their corresponding curvature-like $(0,4)$-tensors are determined by $g$ as usually, $\pi_i(x,y,z,w)=g\left(\pi_i(x,y)z,w\right)$, $i=1,2$.


Let $\al$ be a non-degenerate 2-plane in $T_p\MM$, $p \in \MM$, having a basis  $\{x,y\}$.
The sectional curvature $k(\al;p)$  is determined by
\begin{equation}\label{sect}
k(\al;p)=\frac{R(x,y,y,x)}{\pi_1(x,y,y,x)}.
\end{equation}



\subsection{Almost paracomplex  Riemannian manifold}


Consider a differentiable
manifold $\NN$ of arbitrary dimension.
Let us recall that an \emph{almost product structure} $P$ on $\NN$
is an endomorphism in the tangent bundle $T\NN$ of $\NN$ such that
$P^2$ is the identity $I$ in $T\NN$, but $P$ does not coincide with $I$.
Then, such a manifold $(\NN, P)$ is called an \emph{almost product manifold}.
In the particular case when the eigenvalues $+1$ and $-1$ of $P$ have the same
multiplicity $n$, the structure $P$ is called an \emph{almost paracomplex structure}
and $(\NN, P)$ is known as an \emph{almost paracomplex manifold} of dimension $2n$
\cite{CrFoGa96}. In this case $P$ is traceless, \ie $\tr P=0$.
This kind of manifolds are also known as almost product Riemannian manifolds
with $\tr P=0$ in \cite{StaGri}.

As it is known, the $2n$-dimensional paracontact distribution
$\HH=\ker(\eta)$ of $(\MM,\f,\xi,\eta,g)$ can be considered as an almost paracomplex manifold {$\NN$}
equipped with an almost paracomplex structure
$P=\f|_\HH$ and a metric $h=g|_\HH$, where $\f|_\HH$ and $g|_\HH$ are  the
restrictions of $\f$ and $g$ on $\HH$, respectively.

Let $x'$, $y'$, $z'$, $w'$ denote arbitrary vector fields or vectors on $\HH$ {of $\MM$}.

Since $g$ is a Riemannian  metric of $(\MM,\f,\xi,\eta)$,
then $h$ is the corresponding Riemannian metric on $\HH$ and due to \eqref{str} it
is compatible with $P$ as follows
\begin{equation}\label{P-comp}
h(Px',Py')=h(x',y').
\end{equation}
The associated Riemannian  metric $\widetilde{h}$ of $h$ is determined by
\begin{equation*}\label{assoc-norden}
\widetilde{h}(x',y')=h(x',Py')
\end{equation*}
and it is a pseudo-Riemannian of signature $(n,n)$.

Let us note that an $2n$-dimensional manifold $\NN$, which is equipped with an almost paracomplex structure $P$ and a Riemannian metric $h$ satisfying \eqref{P-comp},
is known as an \emph{almost paracomplex Riemannian manifold} $(\NN,P,h)$.

In \cite{Nav}, A.\,M.\  Naveira gave a classification of almost product Riemannian
manifolds $(\NN, P, h)$ with respect
to the covariant derivative $\n' P$ for the Levi-Civita connection $\nabla'$ of $h$.

Furthermore, using this  classification,
M. Staikova and K. Gribachev present a classification of almost paracomplex Riemannian manifolds $(\NN, P, h)$ in \cite{StaGri}.
The basic classes of the Staikova-Gribachev classification are three, $\W_1$, $\W_2$ and $\W_3$.
Their intersection is the class $\W_0$
determined by the condition $\n' P=0$.
The manifolds of the latter class are known as
\emph{locally product Riemannian manifolds} \cite{Nav}, \emph{Riemannian $P$-manifolds} \cite{StaGri}
or \emph{paraholomorphic paracomplex Riemannian
manifolds} \cite{IvMan2}.
In the present paper, we call such manifolds briefly $\W_0$-manifolds.

Let us remark that $\W_1$ contains
the manifolds which are locally conformal equivalent to $\W_0$-manifolds.

In the present paper we deal with the case of almost paracomplex  Riemannian manifolds $(\NN,P,h)$ of the lowest dimension, \ie $\dim\NN=2$.
As it is known such a manifold is a space-form, \ie
the manifold has a curvature tensor of the form $R'=k'\,\pi'_1$,
where $k'$ is its pointwise constant sectional curvature,
and $\pi'_1$ is the essential curvature-like tensor, such as $\pi_1$ in \eqref{pi12}, but with respect to $h$.
Moreover, each $(\NN,P,h)$ is a $\W_1$-manifold determined by 
\begin{equation}\label{FW1}
\begin{array}{l}
      F'(x',y',z')=\frac12\{h(x',y')\ta'(z')+h(x',z')\ta'(y')\\[4pt]
  \phantom{F'(x',y',z')=\frac12\{}+h(x',Py')\ta'^*(z')+h(x',Pz')\ta'^*(y')\},
\end{array}
\end{equation}
where $F'$ is the fundamental tensor of $(\NN,P,h)$ defined by the following equality
$F'(x',y',z')=h\bigl( \left( \n'_{x'} P \right)y',z'\bigr)$, $\ta'$ is the Lee form determined analogously as in \eqref{t} 
 and $\ta'^*=-\ta'\circ P$ is valid \cite{StaGri}.

\section{Cone over a 2-dimensional paracomplex Riemannian space-form}\label{sect-C}

Let us consider $\mathcal{C}(\NN)=\R^+\times \NN$,
the cone over a 2-dimensional paracomplex space-form $(\NN,P,h)$,
where $\R^+$ is the set of positive reals. We introduce  a Riemannian metric $g$ on $\mathcal{C}(\NN)$  defined by
\begin{equation}\label{C-g}
 g\left(\left(x',a\ddt\right),\left(y',b\ddt\right)\right)
=t^2\,h(x',y')+ab,
\end{equation}
where $t$ is the coordinate on $\R^+$ and $a$, $b$ are
differentiable functions on $\mathcal{C}(\NN)$.

We equip $\mathcal{C}(\NN)$ with  an almost paracontact almost paracomplex structure $(\f,\xi,\eta)$
by the following way
\begin{equation}\label{C-str}
\f |_\HH=P, \quad \xi=\ddt, \quad \eta=\D t, \quad \f\xi=0,\quad \eta\circ\f =0.
\end{equation}
Obviously, we establish the  truthfulness of the following
\begin{prop}
The manifold $(\mathcal{C}(\NN),\f,\xi,\eta,g)$ is a 3-dimensional  apapR manifold.
\end{prop}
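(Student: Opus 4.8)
The plan is to verify directly that the structure $(\f,\xi,\eta,g)$ defined in \eqref{C-str} and \eqref{C-g} satisfies all the defining conditions \eqref{str} of a 3-dimensional apapR manifold. Since $\dim\NN=2$ and $\mathcal{C}(\NN)=\R^+\times\NN$, the dimension of $\mathcal{C}(\NN)$ is $3=2n+1$ with $n=1$, which gives the correct lowest dimension at once. The tangent bundle at each point splits as $T\mathcal{C}(\NN)=\HH\oplus\Span\{\ddt\}$, where $\HH$ is identified with $T\NN$ and carries the almost paracomplex structure $P$; I would exploit this splitting throughout, checking each identity separately on $\HH$ and on the $\xi$-direction.

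First I would confirm the algebraic structure relations. From \eqref{C-str} we have $\eta(\xi)=\D t(\ddt)=1$, and $\f\xi=0$, $\eta\circ\f=0$ hold by definition. For $\f^2=\I-\eta\otimes\xi$: on $\HH$ one computes $\f^2 x'=P^2 x'=x'$ since $P$ is an almost paracomplex structure with $P^2=I$, while $\eta\otimes\xi$ vanishes on $\HH$ because $\eta(x')=\D t(x')=0$; on $\xi$ we get $\f^2\xi=0=\xi-\eta(\xi)\xi$. The trace condition $\tr\f=0$ follows because $\f$ acts as $P$ on the $2$-dimensional $\HH$ with $\tr P=0$ and sends $\xi$ to $0$. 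Thus the endomorphism identities reduce to the corresponding facts about $P$.

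Next I would verify the metric compatibility $g(\f x,\f y)=g(x,y)-\eta(x)\eta(y)$. Writing $x=(x',a\ddt)$ and $y=(y',b\ddt)$, the key observation is that $\f x=(Px',0)$ has no $\ddt$-component, so by \eqref{C-g} we obtain $g(\f x,\f y)=t^2\,h(Px',Py')$. By the compatibility \eqref{P-comp} of $h$ with $P$, this equals $t^2\,h(x',y')$. On the other hand, $g(x,y)-\eta(x)\eta(y)=t^2\,h(x',y')+ab-ab=t^2\,h(x',y')$, since $\eta(x)=a$ and $\eta(y)=b$. The two sides agree, which establishes the last relation in \eqref{str}.

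I expect no serious obstacle here: the construction is tailored so that the warping factor $t^2$ cancels consistently and the $\ddt$-direction decouples. The only point requiring mild care is bookkeeping the identification of $\HH$ with $T\NN$ and confirming that $\eta$ annihilates $\HH$ while being normalized on $\ddt$, so that the cross terms vanish and the $+ab$ term in \eqref{C-g} supplies exactly the $\eta(x)\eta(y)$ needed. Once these routine checks are assembled, all conditions of \eqref{str} are satisfied, and $(\mathcal{C}(\NN),\f,\xi,\eta,g)$ is a $3$-dimensional apapR manifold as claimed.
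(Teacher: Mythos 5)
Your verification is correct and complete; the paper itself offers no proof, simply stating the proposition is established "obviously," and your routine check of each condition in \eqref{str} (splitting $T\mathcal{C}(\NN)$ into $\HH$ and the $\xi$-direction, reducing everything to $P^2=I$, $\tr P=0$ and \eqref{P-comp}) is exactly the omitted argument. Nothing is missing and no different route is taken.
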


According to \eqref{C-g} and \eqref{C-str} and the well known Koszul equality
\begin{equation}\label{koszul}
\begin{array}{l}
2g(\n_{x}y,z)=xg(y,z)+yg(z,x)-zg(x,y)\\[4pt]
\phantom{2g(\n_xy,z)=}+g([x,y],z)+g([z,x],y)+g([z,y],x),
\end{array}
\end{equation}
we get the following equalities for
the Levi-Civita connection $\n$ of $g$:
\begin{equation*}\label{C-n-g}
\begin{array}{ll}
    g\left(\n_{x'} y',z'\right)=t^2\,h\left(\n'_{x'} y',z'\right),\quad &
    g\left(\n_\xi y',z'\right)=t\,h\left(y', z'\right), %
    \\[4pt]
    g\left(\n_{x'} y',\xi\right)=-t\,h\left(x',y'\right),\quad &
    g\left(\n_{x'} \xi,z'\right)=t\,h\left(x', z'\right).
\end{array}
\end{equation*}

Bearing in mind the latter equalities,  we obtain  the covariant derivatives with respect to $\n$ as follows
\begin{equation}\label{C-n}
    \n_{x'} y'=\n'_{x'} y'-\frac{1}{t}g\left(x', y'\right)\xi,\quad
    \n_\xi y'=\frac{1}{t}y',\quad
    \n_{x'} \xi=\frac{1}{t}x'.
\end{equation}

By virtue of \eqref{C-n}, we get
\begin{equation*}\label{C-Rxyz}
\begin{array}{l}
    R(x',y')z'=\frac{1}{t^2}(k'-1)\pi_1(x',y')z',\\[4pt]
    R(x',y')\xi=R(x',\xi)y'=R(\xi,x')y'=R(x',\xi)\xi=R(\xi,y')\xi=0.
\end{array}
\end{equation*}
Thus, by direct computations, we establish the following
\begin{equation}\label{C-Rxyzw}
\begin{array}{l}
    R(x',y',z',w')=\frac{1}{t^2}(k'-1)\pi_1(x',y',z',w'),\\[4pt]
    R(\xi,x',y',z')=R(x',\xi,y',z')=R(x',y',\xi,z')=R(x',y',z',\xi)\\[4pt]
    \phantom{R(\xi,x',y',z')}
    =R(y',\xi,\xi, z')=R(\xi,y',z',\xi)=0.
\end{array}
\end{equation}

Using  \eqref{fbasis} and \eqref{C-g}, we determine the components $h_{ij}=h(e_i,e_j)$ and $g_{ij}=g(e_i,e_j)$ with respect to the basis. The non-zero ones of them are the following:
\begin{equation}\label{C-hijgij}
h_{11}=h_{22}=\frac{1}{t^2}, \quad g_{00}=g_{11}=g_{22}=1.
\end{equation}

Taking into account \eqref{koszul}, \eqref{C-n} and \eqref{C-hijgij}, we determine the components of the covariant derivatives of $e_i$ with respect to $\n$:
\begin{equation}\label{C-nnij}
\begin{array}{lll}
\n_{e_1}e_1=\n'_{e_1}e_1-\frac{1}{t}e_0, \quad &\n_{e_1}e_2=\n'_{e_1}e_2, \quad &\n_{e_1}e_0=\frac{1}{t}e_1,\\[4pt]
\n_{e_2}e_1=\n'_{e_2}e_1, \quad &\n_{e_2}e_2=\n'_{e_2}e_2-\frac{1}{t}e_0, \quad &\n_{e_2}e_0=\frac{1}{t}e_2,\\[4pt]
\n_{e_0}e_1=\frac{1}{t}e_1, \quad &\n_{e_0}e_2=\frac{1}{t}e_2,  \quad &\n_{e_0}e_0=0.
\end{array}
\end{equation}

Bearing in mind \eqref{C-str}, \eqref{C-hijgij} and \eqref{C-nnij}, we obtain the components $F_{ijk}$
of $F$ with respect to the basis  $\{e_0,e_1,e_2\}$. The non-zero ones of them are:
\begin{equation}\label{C-Fijk}
\begin{array}{l}
  F_{111}=-F_{122}=\ta'_1, \qquad  F_{222}=-F_{211}=\ta'_2,\\[4pt]
  F_{120}=F_{102}=F_{210}=F_{201}=-\frac{1}{t},
\end{array}
\end{equation}
where $\ta'_i=\ta'(e_i)$ for $i=1,2$.

According to \eqref{t3}, we compute the components of the Lee forms of $(\mathcal{C}(\NN), \f, \xi,\allowbreak{} \eta, g)$ and the non-zero ones of them are:
\begin{equation}\label{ta-12}
\ta_1=\ta'_1, \quad \ta_2=\ta'_2, \quad \ta^*_1=-\ta'_2, \quad \ta^*_2=-\ta'_1, \quad \ta^*_0=-\frac{2}{t}.
\end{equation}

By virtue of \eqref{Fi3} and \eqref{C-Fijk}, we establish the following form of the tensor $F$:
\begin{equation*}\label{C-F1+F5}
F(x,y,z)=(F^1+F^5)(x,y,z).
\end{equation*}
According to \eqref{Fi3} and \eqref{FW1},  the non-zero components of $F^1$ and $F^5$  are the following:
\begin{equation}\label{Fijk15}
\begin{array}{l}
F^1_{111}=-F^1_{122}=\ta_1, \quad F^1_{222}=-F^1_{211}=\ta_2,\\[4pt]
F^5_{120}=F^5_{102}=F^5_{210}=F^5_{201}=\frac{1}{2}\ta^*_0.
\end{array}
\end{equation}
Thus, we obtain the  following
\begin{thm}
The 3-dimensional apapR manifold $(\mathcal{C}(\NN), \f, \xi, \eta, g)$:
\begin{enumerate}
  \item belongs to $\F_1\oplus\F_5$,
  \item belongs to $\F_5$ if and only if $(\NN,P,h)$ is a $\W_0$-manifold,
  \item cannot belong to $\F_1$.
\end{enumerate}
\end{thm}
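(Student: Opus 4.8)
The plan is to read the three assertions directly off the fundamental tensor that has already been computed. The preliminaries record the classification criterion that $\M$ lies in a direct sum $\F_i\oplus\F_j\oplus\cdots$ precisely when $F=F_i+F_j+\cdots$, so every claim reduces to inspecting which of the components $F^1$ and $F^5$ can vanish. The essential inputs I would cite are the decomposition $F=F^1+F^5$ established in \eqref{C-F1+F5}, the explicit non-zero components in \eqref{Fijk15}, and the values of the Lee-form components collected in \eqref{ta-12}.

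For item (1) I would simply invoke \eqref{C-F1+F5}: since $F=F^1+F^5$, the classification criterion places $(\mathcal{C}(\NN),\f,\xi,\eta,g)$ in $\F_1\oplus\F_5$, with no further computation needed. For item (3) I would argue by contradiction: belonging to $\F_1$ would force $F^5=0$, but by \eqref{Fijk15} the component $F^5_{120}=\tfrac12\ta^*_0$ is controlled by $\ta^*_0$, and \eqref{ta-12} gives $\ta^*_0=-\tfrac{2}{t}$, which is nowhere zero on $\R^+$. Hence $F^5\neq 0$ identically and the manifold can never reduce to $\F_1$.

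The one step carrying actual content is the equivalence in item (2). Membership in $\F_5$ is the same as $F^1=0$, and by \eqref{Fijk15} this holds exactly when $\ta_1=\ta_2=0$. Using the identifications $\ta_1=\ta'_1$ and $\ta_2=\ta'_2$ from \eqref{ta-12}, this is equivalent to the vanishing of the Lee form $\ta'$ of the base $(\NN,P,h)$. It then remains to identify $\ta'=0$ with the $\W_0$-condition: since $(\NN,P,h)$ is a $\W_1$-manifold with $F'$ given by \eqref{FW1}, and since $\ta'^*=-\ta'\circ P$, the vanishing of $\ta'$ forces $F'=0$, i.e. $\n'P=0$, which is exactly the definition of a $\W_0$-manifold; the converse implication is immediate.

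I expect no serious obstacle, since the tensor computations that drive the argument have already been carried out in \eqref{C-Fijk}--\eqref{Fijk15}. The only point requiring care is the last equivalence in (2), where I must use that in dimension $2$ the class $\W_1$ degenerates to $\W_0$ precisely when the Lee form vanishes, rather than under some weaker conformal-type condition; this is guaranteed by the explicit form \eqref{FW1} together with the relation $\ta'^*=-\ta'\circ P$.
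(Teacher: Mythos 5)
Your proposal is correct and follows essentially the same route as the paper, whose proof consists precisely of the comparison of \eqref{C-Fijk}, \eqref{ta-12} and \eqref{Fijk15} that you carry out explicitly. Your added care in item (2) — noting that \eqref{FW1} expresses $F'$ entirely through $\ta'$ and $\ta'^*=-\ta'\circ P$, so $\ta'=0$ genuinely forces $\n'P=0$ — is exactly the point the paper leaves implicit.
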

\begin{proof}
We compare \eqref{C-Fijk}, \eqref{ta-12} and \eqref{Fijk15} to accomplish the proof.
\end{proof}


Next, bearing in mind  \eqref{C-Rxyzw}, \eqref{C-hijgij} and \eqref{C-nnij}, we determine   the basic  components $R_{ijk\ell}=R(e_i,e_j,e_k,e_\ell)$ of $R$. The non-zero ones of them are obtained by the basic symmetries of $R$ and the following
\begin{equation}\label{C-Rijkl}
  R_{1212}=-\frac{1}{t^2}(k'-1).
\end{equation}

Thus, it is valid the following

\begin{thm}
The 3-dimensional apapR manifold $(\mathcal{C}(\NN), \f, \xi, \eta, g)$ is flat if and only if $k'=1$.
\end{thm}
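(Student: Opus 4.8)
The plan is to read off the claim directly from the curvature components already computed in~\eqref{C-Rijkl}. The manifold is flat precisely when every basic component $R_{ijk\ell}$ vanishes, and since all components have already been reduced (via the basic symmetries of $R$ and the vanishing relations in~\eqref{C-Rxyzw}) to the single essential quantity $R_{1212}=-\tfrac{1}{t^2}(k'-1)$, flatness is equivalent to $R_{1212}=0$.

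First I would invoke~\eqref{C-Rxyzw}, which shows that every component of $R$ involving the characteristic direction $\xi=e_0$ vanishes identically on the cone; hence the only components that can be nonzero are those lying entirely in the paracontact distribution $\HH=\Span\{e_1,e_2\}$. Next, I would note that on a $2$-dimensional horizontal distribution the curvature tensor has, up to the symmetries of $R$, a \emph{single} independent component, namely $R_{1212}$. Combining these two observations, the curvature tensor $R$ of $(\mathcal{C}(\NN),\f,\xi,\eta,g)$ vanishes identically if and only if this single component vanishes.

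Finally I would substitute the explicit value $R_{1212}=-\tfrac{1}{t^2}(k'-1)$ from~\eqref{C-Rijkl}. Since $t\in\R^+$, the factor $-\tfrac{1}{t^2}$ is nonzero for every $t$, so $R_{1212}=0$ holds at every point if and only if $k'-1=0$, \ie $k'=1$. This gives the asserted equivalence.

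There is essentially no obstacle here: the statement is an immediate corollary of the curvature computation~\eqref{C-Rijkl}, and the only point requiring a word of justification is the reduction to a single essential component, which follows from~\eqref{C-Rxyzw} together with the fact that the horizontal distribution is $2$-dimensional. I expect the proof to be a one- or two-line verification referring back to~\eqref{C-Rxyzw} and~\eqref{C-Rijkl}.
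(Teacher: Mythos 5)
Your proposal is correct and matches the paper's own (implicit) argument: the theorem is stated immediately after \eqref{C-Rijkl} precisely because all curvature components reduce, via \eqref{C-Rxyzw} and the symmetries of $R$, to the single component $R_{1212}=-\tfrac{1}{t^2}(k'-1)$, which vanishes if and only if $k'=1$. Your added remark justifying the reduction to one essential component is a welcome (if routine) elaboration of what the paper leaves tacit.
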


Moreover, using \eqref{sect}, \eqref{C-hijgij} and \eqref{C-Rijkl}, we compute the basic sectional curvatures $k_{ij}=k(e_i,e_j)$ determined by the basis $\{e_i, e_j\}$ of the corresponding 2-plane as
follows
\begin{equation}\label{C-kij}
k_{12}=\frac{1}{t^2}(k'-1),\qquad k_{01}=k_{02}=0.
\end{equation}

By virtue of \eqref{rhotau}, \eqref{C-hijgij} and \eqref{C-Rijkl}, we get the basic components $\rho_{jk}=\rho(e_j,e_k)$ and $\rho^*_{jk}=\rho^*(e_j,e_k)$ of  $\rho$ and $\rho^*$, respectively, as well as the values
of $\tau$ and $\tau^*$. The non-zero ones of them are the following:
\begin{equation}\label{C-rhotau}
\begin{array}{lll}
\rho_{11}=\rho_{22}=-\rho^{*}_{12}=-\rho^{*}_{21}=\frac12\tau=\frac{1}{t^2}(k'-1).
\end{array}
\end{equation}

Taking into account \eqref{C-kij} and \eqref{C-rhotau}, we get the following
\begin{thm}
 The following properties of $(\mathcal{C}(\NN), \f, \xi, \eta, g)$ are valid:
\begin{enumerate}
  \item its sectional curvatures of $\xi$-sections vanish;
  \item it is $*$-scalar flat, \ie $\tau^*=0$;
  \item $\tau<0$ if and only if $k'<1$;
  \item $\tau>0$ if and only if $k'>1$.
\end{enumerate}
\end{thm}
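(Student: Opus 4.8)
The plan is to read off all four statements directly from the curvature data already assembled in equations \eqref{C-kij} and \eqref{C-rhotau}, since the real work---the computation of $R$, $\rho$, $\rho^*$, $\tau$ and $\tau^*$---has been completed before the theorem is stated. For part (1), the $\xi$-sections are the 2-planes spanned by $\{\xi,e_1\}=\{e_0,e_1\}$ and $\{\xi,e_2\}=\{e_0,e_2\}$, so the claim that their sectional curvatures vanish is precisely the assertion $k_{01}=k_{02}=0$ recorded in \eqref{C-kij}; I would simply cite that equation.

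For part (2), I would invoke \eqref{C-rhotau}, which lists the nonzero components of $\rho^*$ as $\rho^*_{12}=\rho^*_{21}=-\tfrac{1}{t^2}(k'-1)$, with all other basic components vanishing. The $*$-scalar curvature is $\tau^*=g^{ij}\rho^*(e_i,e_j)$ by \eqref{rhotau}; since $g$ is orthonormal in this basis by \eqref{C-hijgij}, this reduces to $\tau^*=\rho^*_{00}+\rho^*_{11}+\rho^*_{22}$. Each of these diagonal components is zero---$\rho^*_{11}$ and $\rho^*_{22}$ do not appear among the nonzero entries, and only the off-diagonal $\rho^*_{12}$, $\rho^*_{21}$ survive---so $\tau^*=0$ follows immediately.

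Parts (3) and (4) are two readings of a single inequality. From \eqref{C-rhotau} we have $\tfrac12\tau=\tfrac{1}{t^2}(k'-1)$, hence $\tau=\tfrac{2}{t^2}(k'-1)$. Since $t\in\R^+$, the factor $\tfrac{2}{t^2}$ is strictly positive, so the sign of $\tau$ coincides with the sign of $k'-1$. Therefore $\tau<0 \Leftrightarrow k'<1$ and $\tau>0 \Leftrightarrow k'>1$, which are exactly statements (3) and (4).

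There is no serious obstacle here: the theorem is a corollary of the explicit curvature components, and every quantity needed is already tabulated in \eqref{C-hijgij}, \eqref{C-kij} and \eqref{C-rhotau}. The only point requiring a moment's care is the evaluation of $\tau^*$ in part (2), where one must confirm that the diagonal components $\rho^*_{ii}$ genuinely vanish---this is guaranteed by the phrase ``the non-zero ones of them are'' preceding \eqref{C-rhotau}, which certifies that the only surviving entries are the off-diagonal $\rho^*_{12}$ and $\rho^*_{21}$ that do not contribute to the trace. Accordingly the proof consists of citing \eqref{C-kij} for (1), tracing \eqref{C-rhotau} against \eqref{C-hijgij} for (2), and comparing signs in the relation $\tfrac12\tau=\tfrac{1}{t^2}(k'-1)$ for (3) and (4).
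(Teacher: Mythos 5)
Your proposal is correct and coincides with the paper's own argument: the theorem there is stated as an immediate consequence of \eqref{C-kij} and \eqref{C-rhotau} (with \eqref{C-hijgij} giving the orthonormality needed to trace), which is exactly the reading-off you perform, including the sign comparison $\tau=\tfrac{2}{t^2}(k'-1)$ for parts (3) and (4).
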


\section{Hyperbolic extension of a 2-dimensional paracomplex Riemannian space-form}\label{sect-S}

In this  section, we construct a special type of a 3-dimensional warped product manifold
$\mathcal{S}(\NN)$ of $\mathbb R^+$ and a paracomplex space-form $(\NN,P,h)$ from the class $\W_1$.
Then, $(\NN,P,h)$ is defined by \eqref{FW1} for some 1-form $\ta'$.

Let $\D t$ be the coordinate 1-form on $\mathbb R^+$ and let us introduce an
almost paracontact almost paracomplex structure and a Riemannian metric  on $\mathcal{S}(\NN)$ as follows
\begin{equation}\label{S-str}
\f |_\HH=P, \quad \xi=\ddt, \quad \eta=\D t, \quad \eta\circ\f =0,\quad g=\D t^2+\cosh{2t}\,h+\sinh{2t}\,\widetilde{h}.
\end{equation}

Then, it is easy to check the following
\begin{prop}
The manifold $(\mathcal{S}(\NN),\f,\xi,\eta,g)$ is a 3-dimensional apapR manifold.
\end{prop}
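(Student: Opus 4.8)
The plan is to check directly that the structure $(\f,\xi,\eta,g)$ introduced in \eqref{S-str} satisfies every defining relation \eqref{str} of an apapR manifold, organising the verification into the algebraic identities for the structure tensors, the compatibility of the metric, and the positive-definiteness of $g$.

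The algebraic identities are immediate from the construction. Since $\xi=\ddt$ and $\eta=\D t$ we have $\eta(\xi)=1$, while $\f\xi=0$ and $\eta\circ\f=0$ are imposed in \eqref{S-str}. As $\f|_\HH=P$ with $P^2=\I$ on $\HH$ and $\f\xi=0$, a splitting $x=x'+\eta(x)\xi$ with $x'\in\HH$ gives $\f^2x=P^2x'=x'=x-\eta(x)\xi$, i.e. $\f^2=\I-\eta\otimes\xi$; and $\tr\f=\tr P=0$ since $P$ is almost paracomplex. Only $g(\f x,\f y)=g(x,y)-\eta(x)\eta(y)$ remains to be established.

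For this I would write $x=x'+\eta(x)\xi$, $y=y'+\eta(y)\xi$ with $x',y'\in\HH$ and use $\f\xi=0$ to reduce the left-hand side to $g(\f x,\f y)=g(Px',Py')$, with $Px',Py'\in\HH$. The key point is that $P$ preserves both $h$ and its associated metric $\widetilde h(x',y')=h(x',Py')$ on $\HH$: relation \eqref{P-comp} gives $h(Px',Py')=h(x',y')$, and substituting $Py'$ for $y'$ there shows $P$ is $h$-self-adjoint, whence $\widetilde h(Px',Py')=h(Px',P^2y')=h(Px',y')=h(x',Py')=\widetilde h(x',y')$. Since the restriction of $g$ to $\HH$ equals $\cosh 2t\,h+\sinh 2t\,\widetilde h$, both summands are $P$-invariant, so $g(Px',Py')=\cosh 2t\,h(x',y')+\sinh 2t\,\widetilde h(x',y')$. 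On the other hand the $\D t^2$ term of $g$ contributes exactly $\eta(x)\eta(y)$, so $g(x,y)-\eta(x)\eta(y)$ equals the same expression, giving \eqref{str}.

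It remains to confirm that $g$ is Riemannian. On the $\xi$-line $g$ restricts to $\D t^2>0$, and on the $2$-plane $\HH$ it is $\cosh 2t\,h+\sinh 2t\,\widetilde h$. Choosing an $h$-orthonormal basis $\{u_+,u_-\}$ of $\HH$ of $P$-eigenvectors ($Pu_\pm=\pm u_\pm$; these are $h$-orthogonal by \eqref{P-comp}), one finds $h=\mathrm{diag}(1,1)$ and $\widetilde h=\mathrm{diag}(1,-1)$, so $g|_\HH=\mathrm{diag}(e^{2t},e^{-2t})$, which is positive definite. I expect this positivity to be the only point of genuine substance: the compatibility in \eqref{str} is robust and would survive any $t$-dependent combination of $h$ and $\widetilde h$, whereas the specific hyperbolic coefficients (via $\cosh^2-\sinh^2=1$, equivalently the exponential eigenvalues $e^{\pm 2t}$) are exactly what keeps $g$ Riemannian for all $t$ and justify the name hyperbolic extension.
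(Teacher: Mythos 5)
Your proposal is correct and follows the direct verification that the paper leaves implicit (the paper offers no written proof beyond ``it is easy to check''): you confirm each identity of \eqref{str} from \eqref{S-str}, using \eqref{P-comp} and the self-adjointness of $P$ with respect to $h$ to get the $P$-invariance of $\widetilde{h}$, and hence the compatibility of $g$. Your additional diagonalisation $g|_{\HH}=\mathrm{diag}(e^{2t},e^{-2t})$ in a $P$-eigenbasis is a worthwhile touch, since the positive definiteness of $g$ is indeed the only non-routine point and it pinpoints why the hyperbolic coefficients are the right choice.
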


In the partial case, when $(\NN,P,h)$ is paraholomorphic, \ie $\n'P=0$, then the Riemannian manifold $(\mathcal{S}(\NN),\f,\xi,\eta,g)$ is para-Sasaki-like \cite{IvMan2}.

Taking into account \eqref{FW1}, \eqref{koszul} and \eqref{S-str}, we get the following formulae for the Levi-Civita connections $\n$ and $\n'$ of $g$ and $h$, respectively:
\begin{equation}\label{S-nn}
\begin{array}{c}
    \n_{x'} y'=\n'_{x'} y'+ \frac{1}{2}\sinh{2t}\left\{g\left(x',y'\right)\ta'^{\sharp}- g\left(x',Py'\right)P\ta'^{\sharp}\right\}-g\left(x',Py'\right)\xi,\\[4pt]
    \n_\xi y'=Py',\qquad
    \n_{x'} \xi=Px',\qquad \n_{\xi} \xi=0.
\end{array}
\end{equation}
 In the latter equalities and further, $\ta'^{\sharp}$ denotes the dual vector of 
 $\ta'$ with respect to $h$
 on $(\NN,P,h)$.
 Analogously, $\ta^{\sharp}$ stands for the dual vector of 
 $\ta$ with respect to $g$ on $(\mathcal{S}(\NN),\f,\xi,\eta,g)$.
Bearing in mind \eqref{S-str}, we have the following relations
\begin{equation*}\label{ta-ta'}
    \ta^\s\vert_\HH = \cosh 2t\, \ta'^\s - \sinh 2t\, P\ta'^\s,\qquad \ta(z')=\ta'(z').
\end{equation*}

Taking into account \eqref{S-nn}, we compute the following
\begin{equation}\label{S-Rxyz}
\begin{array}{l}
    R(x',y')z'=k'\left\{\cosh2t\,\pi_1(x',y')z' - \sinh2t\,P\pi_2(x',y')z'\right\} - \pi_2(x',y')z'\\[4pt]
        \phantom{R(x',y')z'=}
    +
    \frac{1}{2}\sinh 2t\left[
    g\left(y',z'\right)\n_{x'}\ta'^\s-g\left(x',z'\right)\n_{y'}\ta'^\s\right.\\[4pt]
    \phantom{R(x',y')z'=+\frac{1}{4}\sinh 2t\left[\right.}
    \left.
    -g\left(y',Pz'\right)\n_{x'}P\ta'^{\sharp}+g\left(x',Pz'\right)\n_{y'}P\ta'^{\sharp}\right]\\[4pt]
    \phantom{R(x',y')z'=}
    +\left.\frac{1}{4} \{\pi_1-\pi_2\}(x',y',z',\ta'^{\sharp})
    \left\{\sinh 2t\, P\ta^{\sharp}\vert_\HH+2\cosh 2t\, \xi\right\} \right.\\[4pt]
    R(x',y')\xi=
    \frac{1}{2}\{\pi_1-\pi_2\}(x',y')\ta^{\sharp}\vert_\HH,\\[4pt]
    R(\xi,y')z'=
    \frac{1}{2}\{\pi_1-\pi_2\}(\ta^{\sharp}\vert_\HH,y')z'-g\left(y',z'\right)\xi, \\[4pt]
    R(x',\xi)z'=
    \frac{1}{2}\{\pi_1-\pi_2\}(x',\ta^{\sharp}\vert_\HH)z'+g\left(x',z'\right)\xi
 .
\end{array}
\end{equation}

Bearing in mind   \eqref{S-str} and \eqref{S-Rxyz}, we obtain the following

\begin{equation}\label{S-Rxyzw}
\begin{array}{l}
   R(x',y',z',w')=k'\{\cosh2t\,\pi_1(x',y',z',w')-\sinh2t\,\pi_2(x',y',z',Pw')\}\\[4pt]
    \phantom{R(x',y',z',w')=}-\pi_2(x',y',z',w')\\[4pt]
    \phantom{R(x',y',z',w')=}
    +
    \frac{1}{2}\sinh 2t\left\{
    g\left(y',z'\right)g\left(\n_{x'}\ta'^\s,w'\right)\right.\\[4pt]
    \phantom{R(x',y',z',w')=+\frac{1}{2}\sinh 2t\left\{\right.}
    -g\left(x',z'\right)g\left(\n_{y'}\ta'^\s,w'\right)\\[4pt]
    \phantom{R(x',y',z',w')=+\frac{1}{2}\sinh 2t\left\{\right.}
    -g\left(y',Pz'\right)g\left(\n_{x'}P\ta'^{\sharp},w'\right)\\[4pt]
    \phantom{R(x',y',z',w')=+\frac{1}{2}\sinh 2t\left\{\right.}
    \left.
    +g\left(x',Pz'\right)g\left(\n_{y'}P\ta'^{\sharp},w'\right)\right\}\\[4pt]
    \phantom{R(x',y',z',w')=}
    +\left.\frac{1}{4}\sinh 2t\, \{\pi_1-\pi_2\}(x',y',z',\ta'^{\sharp})
    \ta'(Pw'), \right.\\[4pt]
    R(x',y',z',\xi)=\frac{1}{2} \{\pi_1-\pi_2\}(x',y',z',\ta^\s\vert_\HH),\\[4pt]
    R(x',y',\xi,w')=\frac{1}{2} \{\pi_1-\pi_2\}(x',y',\ta^{\sharp}\vert_\HH,w'),\\[4pt]
    R(x',\xi,z',w')=\frac{1}{2} \{\pi_1-\pi_2\}(x',\ta^{\sharp}\vert_\HH,z',w'),\\[4pt]
    R(\xi,y',z',w')=\frac{1}{2} \{\pi_1-\pi_2\}(\ta^{\sharp}\vert_\HH,y',z',w'),\\[4pt]
    R(\xi,y',z',\xi)=-g\left(y',z'\right).
\end{array}
\end{equation}

According to \eqref{fbasis} and \eqref{S-str}, we determine the components $g_{ij}$ and $h_{ij}$ 
 as follows
\begin{equation}\label{S-hijgij}
\begin{array}{ll}
g_{00}=g_{11}=g_{22}=1, \quad &g_{12}=g_{21}=0,\\[4pt]
h_{11}=h_{22}=\cosh2t, \quad &h_{12}=h_{21}=-\sinh2t.
\end{array}
\end{equation}

Bearing in mind \eqref{koszul}, \eqref{S-nn} and \eqref{S-hijgij}, we obtain
\begin{equation}\label{S-nnij}
\begin{array}{ll}
\n_{e_1}e_1=\n'_{e_1}e_1+\frac{1}{2}\sinh2t\,\ta'^{\sharp}, \qquad  \n_{e_1}e_2=\n'_{e_1}e_2-\frac{1}{2}\sinh2t\,P\ta'^{\sharp}-\xi,\\[4pt]
\n_{e_2}e_2=\n'_{e_2}e_2+\frac{1}{2}\sinh2t\,\ta'^{\sharp}, \qquad
\n_{e_2}e_1=\n'_{e_2}e_1-\frac{1}{2}\sinh2t\,P\ta'^{\sharp}-\xi, \\[4pt]
\n_{e_0}e_1=\n_{e_1}e_0=e_2, \qquad  \n_{e_0}e_2=\n_{e_2}e_0=e_1,  \qquad \n_{e_0}e_0=0.
\end{array}
\end{equation}

Then, applying \eqref{S-str}, \eqref{S-hijgij} and \eqref{S-nnij}, we determine the components $F_{ijk}$ of $F$. The non-zero of them are the following
\begin{equation}\label{S-Fijk}
\begin{array}{l}
  F_{111}=-F_{122}=\ta'_1, \qquad  F_{222}=-F_{211}=\ta'_2,\\[4pt]
  F_{101}=F_{110}=F_{202}=F_{220}=-1.
\end{array}
\end{equation}

According to \eqref{t3}, we get the components of the Lee forms of $(\mathcal{S}(\NN), \f, \xi,\allowbreak{} \eta, g)$ with respect to the basis. The non-zero  of them are:
\begin{equation}\label{ta-14}
\ta_0=-2, \quad \ta_1=-\ta^*_2=\ta'_1, \quad \ta_2=-\ta^*_1=\ta'_2.
\end{equation}

By virtue of  \eqref{Fi3} and \eqref{S-Fijk}, we establish the truthfulness of the following  equality
\begin{equation*}\label{S-F1+F4}
F(x,y,z)=(F^1+F^4)(x,y,z).
\end{equation*}
By virtue of \eqref{Fi3} and \eqref{FW1}, we have the following non-zero components of $F^1$ and $F^4$ with respect to the basis \eqref{fbasis}:
\begin{equation}\label{Fijk14}
\begin{array}{l}
F^1_{111}=-F^1_{122}=\ta_1, \quad F^1_{222}=-F^1_{211}=\ta_2,\\[4pt]
F^4_{101}=F^4_{110}=F^4_{202}=F^4_{212}=\frac{1}{2}\ta_0.
\end{array}
\end{equation}
Thus, we get the following
\begin{thm}\label{thm14}
The 3-dimensional apapR manifold $(\mathcal{S}(\NN), \f, \xi, \eta, g)$:
\begin{enumerate}
  \item belongs to $\F_1\oplus\F_4$,
  \item belongs to $\F_4$ if and only if $(\NN,P,h)$ is a $\W_0$-manifold,
  \item cannot belong to $\F_1$.
\end{enumerate}
\end{thm}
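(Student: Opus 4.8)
The plan is to mirror the argument used for the cone in the preceding theorem: all three assertions follow by directly reading off and comparing the explicit component formulas already established in \eqref{S-Fijk}, \eqref{ta-14}, and \eqref{Fijk14}. For part~(1), the computations summarised there show that every nonzero component of $F$ in \eqref{S-Fijk} is accounted for by the $F^1$- and $F^4$-components listed in \eqref{Fijk14}, so that $F=F^1+F^4$ holds identically. By the defining property of the classification recalled in the preliminaries (namely $\M\in\F_i\oplus\F_j$ if and only if $F=F_i+F_j$), this immediately places $(\mathcal{S}(\NN),\f,\xi,\eta,g)$ in $\F_1\oplus\F_4$.

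For part~(2), I would use that membership in $\F_4$ is equivalent to the vanishing of the $\F_1$-component, i.e.\ $F^1=0$. By \eqref{Fijk14} this holds iff $\ta_1=\ta_2=0$, and by \eqref{ta-14} we have $\ta_1=\ta'_1$ and $\ta_2=\ta'_2$; hence the condition reduces to $\ta'=0$ on $(\NN,P,h)$. The remaining step is to identify $\ta'=0$ with the $\W_0$-property. Since $(\NN,P,h)$ is a $2$-dimensional, hence $\W_1$, manifold whose fundamental tensor is given by \eqref{FW1} entirely in terms of $\ta'$ and its associate $\ta'^*=-\ta'\circ P$, the vanishing of $\ta'$ forces $F'=0$, that is $\n'P=0$; conversely $\ta'$ is a trace of $F'$, so $F'=0$ returns $\ta'=0$. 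Thus $\ta'=0$ is equivalent to the defining condition $\n'P=0$ of a $\W_0$-manifold, which proves~(2).

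For part~(3), I would note that by \eqref{ta-14} one has $\ta_0=-2$ identically, independently of the base manifold. Consequently the $\F_4$-component $F^4$, which by \eqref{Fijk14} is proportional to $\ta_0$, never vanishes, so $F\neq F^1$ and $(\mathcal{S}(\NN),\f,\xi,\eta,g)$ can never lie in $\F_1$ alone. The only step requiring genuine care is the translation in part~(2) between the analytic condition $\ta'=0$ and the geometric $\W_0$-characterisation via \eqref{FW1}; the remaining assertions are purely a matter of comparing the tabulated components, exactly as in the cone case.
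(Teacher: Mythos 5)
Your proposal is correct and follows exactly the route of the paper, whose proof consists of the single instruction to compare \eqref{S-Fijk}, \eqref{ta-14} and \eqref{Fijk14}; you have merely spelled out that comparison (the decomposition $F=F^1+F^4$ for (1), the equivalence $F^1=0\Leftrightarrow\ta'=0\Leftrightarrow\n'P=0$ via \eqref{FW1} for (2), and $\ta_0=-2\neq0$ forcing $F^4\neq0$ for (3)) in full detail.
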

\begin{proof}
It follows from the above by comparing  \eqref{S-Fijk}, \eqref{ta-14} and \eqref{Fijk14}.
\end{proof}

Taking into account  \eqref{S-Rxyzw}, \eqref{S-hijgij} and \eqref{S-nnij}, we obtain the components $R_{ijk\ell}$ of $R$ with respect to the basis. The non-zero of them are determined by the basic symmetries of $ R$ and the following equalities
\begin{equation}\label{S-Rijkl}
\begin{array}{l}
  R_{1221}=k'\cosh2t+1\\[4pt]
\phantom{R_{1221}=}  +\frac{1}{2}\sinh2t\,
\left\{g\left(\n_{e_1}\ta'^\s, e_1\right)+g\left(\n_{e_2}P\ta'^{\s},e_1\right)\right\}+\frac12\ta'_1\ta'_2,\\[4pt]
  R_{1210}=-\ta'_2, \quad R_{1220}=\ta'_1, \quad R_{0110}=R_{0220}=-1.
\end{array}
\end{equation}

Bu virtue of  \eqref{sect}, \eqref{S-hijgij} and \eqref{S-Rijkl}, we calculate the basic sectional curvatures $k_{ij}$ as follows
\begin{equation}\label{S-kij}
k_{12}=R_{1221},\qquad k_{01}=k_{02}=-1.
\end{equation}

Furthermore, from \eqref{rhotau}, \eqref{S-hijgij} and \eqref{S-Rijkl}, we obtain  the basic components $\rho_{jk}$ and $\rho^*_{jk}$ as well as the values $\tau$ and $\tau^*$ as follows:
\begin{equation}\label{S-rhotau}
\begin{array}{lll}
\rho_{11}=\rho_{22}=R_{1221}-1, \quad & \rho_{00}=-2, \quad &
\rho^{*}_{00}=\rho^{*}_{11}=\rho^{*}_{22}=0, \\[4pt]
\rho_{12}=\rho_{21}=0, \quad & \rho_{01}=\rho_{10}=\ta'_1, \quad & \rho_{02}=\rho_{20}=\ta'_2\\[4pt]
\rho^{*}_{12}=\rho^{*}_{21}=-R_{1221}, \quad & \rho^{*}_{01}=\rho^{*}_{10}=-\ta'_2, \quad & \rho^{*}_{02}=\rho^{*}_{20}=-\ta'_1,\\[4pt]
\tau=2R_{1221}-4, \quad & \tau^{*}=0.
\end{array}
\end{equation}

Using  \eqref{S-kij} and \eqref{S-rhotau}, we conclude the following
\begin{prop}
The manifold $(\mathcal{S}(\NN), \f, \xi, \eta, g)$ has the following properties:
\begin{enumerate}
  \item It has constant negative $\xi$-sectional curvatures;
  \item It is $*$-scalar flat.
\end{enumerate}
\end{prop}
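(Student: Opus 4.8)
The plan is to read both assertions off the curvature data already assembled in \eqref{S-kij} and \eqref{S-rhotau}, but for part~(1) I would be slightly more careful than merely quoting the two basis values, because the claim concerns \emph{all} $\xi$-sections at $p$, not only the coordinate planes spanned by $\{\xi,e_1\}$ and $\{\xi,e_2\}$. Part~(2) is then immediate.

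For part~(1), I would first fix an arbitrary non-degenerate 2-plane $\al$ containing $\xi$ at a point $p$. In dimension $3$ such a plane has a basis $\{\xi,x'\}$ with $x'\in\HH$ a unit vector, and since $\eta(x')=g(\xi,x')=0$ and $g(\xi,\xi)=1$ we get $\pi_1(\xi,x',x',\xi)=g(x',x')g(\xi,\xi)-g(\xi,x')^2=1$, so $\al$ is automatically non-degenerate. The key step is to evaluate the numerator in \eqref{sect} by means of the last line of \eqref{S-Rxyzw}, namely $R(\xi,y',z',\xi)=-g(y',z')$, which gives $R(\xi,x',x',\xi)=-g(x',x')=-1$ with no dependence on $x'$ beyond its norm. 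Hence $k(\al;p)=-1$ for every $\xi$-section at every point, so these sectional curvatures are constant and negative; this is consistent with the basis values $k_{01}=k_{02}=-1$ recorded in \eqref{S-kij}.

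For part~(2), I would simply invoke \eqref{S-rhotau}, in which the associated scalar curvature has already been computed to be $\tau^*=0$. By the definition in \eqref{rhotau} this equality is precisely the statement of $*$-scalar flatness, which finishes the proof.

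The computation is essentially mechanical once \eqref{S-Rxyzw} and \eqref{S-rhotau} are available, so there is no serious obstacle. The only point demanding care is the passage from the two coordinate $\xi$-sections to an arbitrary one: one must check that a possible mixed contribution $R(\xi,e_1,e_2,\xi)$ does not destroy constancy. This is guaranteed by the clean form $R(\xi,y',z',\xi)=-g(y',z')$, which exhibits the $\xi$-sectional operator restricted to $\HH$ as $-\Id$; I would emphasise this as the structural reason all $\xi$-sections are isotropic with value $-1$.
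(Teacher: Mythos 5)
Your proof is correct and follows essentially the same route as the paper, which simply reads both claims off the computed data in \eqref{S-kij} and \eqref{S-rhotau}. Your additional observation that $R(\xi,y',z',\xi)=-g(y',z')$ forces \emph{every} $\xi$-section (not just the coordinate ones $\{\xi,e_1\}$, $\{\xi,e_2\}$) to have curvature $-1$ is a sound and slightly more complete justification of part~(1), but it does not change the substance of the argument.
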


Bearing in mind  \eqref{S-rhotau}  and \eqref{S-hijgij} for $g_{ij}$, we obtain the following
\begin{thm}\label{thm4}
The following properties of $(\mathcal{S}(\NN), \f, \xi, \eta, g)$ are equivalent:
\begin{enumerate}
  \item $(\NN,P,h)$ is a $\W_0$-manifold;
  \item $\rho=k'\cosh2t\,g-(2+k'\cosh2t)\eta\otimes\eta$;
  \item $\rho^*=-(1+k'\cosh2t)(\widetilde{g}-\eta\otimes\eta)$.
\end{enumerate}
\end{thm}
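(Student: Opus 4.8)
The plan is to prove the three-way equivalence by showing that each of (2) and (3) is equivalent to (1), reading everything off the components already assembled in \eqref{S-rhotau}, \eqref{S-hijgij} and \eqref{S-Rijkl}. The first move is to translate condition (1) into an algebraic condition on $\ta'$. By \thmref{thm14} the manifold lies in $\F_1\oplus\F_4$, and it is a $\W_0$-manifold exactly when its $\F_1$-part vanishes; since by \eqref{Fi3} this part $F^1$ is governed by $\ta_1,\ta_2$, which by \eqref{ta-14} equal $\ta'_1,\ta'_2$, condition (1) is equivalent to $\ta'_1=\ta'_2=0$, i.e.\ to the identical vanishing of the Lee form $\ta'$ on $\NN$ (equivalently $\ta'^\s=0$).

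For the implications (1)$\Rightarrow$(2) and (1)$\Rightarrow$(3) I would substitute $\ta'=0$. Then $\ta'^\s$ and all its covariant derivatives vanish, so the bracketed terms in \eqref{S-Rijkl} disappear and $R_{1221}=k'\cosh2t+1$. Inserting this into \eqref{S-rhotau} leaves $\rho_{00}=-2$, $\rho_{11}=\rho_{22}=k'\cosh2t$ and all other $\rho$-components zero, and $\rho^*_{12}=\rho^*_{21}=-(1+k'\cosh2t)$ with every other $\rho^*$-component zero. It then remains to check that the tensors on the right of (2) and (3) have precisely these components. For (2) I use $g_{00}=g_{11}=g_{22}=1$ and $(\eta\otimes\eta)_{00}=1$ from \eqref{fbasis}, \eqref{S-hijgij}; for (3) I first compute $\widetilde{g}_{ij}$ from $\widetilde{g}(x,y)=g(x,\f y)+\eta(x)\eta(y)$ and the $\f$-basis \eqref{fbasis}, getting $\widetilde{g}_{00}=1$, $\widetilde{g}_{12}=\widetilde{g}_{21}=1$ and zero otherwise, so that $\widetilde{g}-\eta\otimes\eta$ is nonzero only in the positions $(1,2),(2,1)$. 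A slot-by-slot comparison then confirms both identities.

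For the two converse implications I would use only the $\xi$-mixed, off-diagonal entries, which single out $\ta'$ and bypass $R_{1221}$ entirely. Evaluating (2) in the positions $(0,1)$ and $(0,2)$ gives, via \eqref{S-rhotau}, $\ta'_1=\rho_{01}=k'\cosh2t\,g_{01}-(2+k'\cosh2t)(\eta\otimes\eta)_{01}=0$ and, likewise, $\ta'_2=0$, because $g_{01}=g_{02}=0$ and $(\eta\otimes\eta)_{01}=(\eta\otimes\eta)_{02}=0$. Evaluating (3) in the same positions gives $-\ta'_2=\rho^*_{01}=-(1+k'\cosh2t)(\widetilde{g}_{01}-(\eta\otimes\eta)_{01})=0$ and $-\ta'_1=\rho^*_{02}=0$, since $\widetilde{g}_{01}=\widetilde{g}_{02}=0$. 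In either case $\ta'_1=\ta'_2=0$ at every point of $\mathcal{S}(\NN)$, hence $\ta'=0$ on $\NN$, which by the first step is condition (1); this yields (2)$\Rightarrow$(1) and (3)$\Rightarrow$(1).

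The one point that needs care is the forward direction into the diagonal entries $\rho_{11},\rho_{22}$ and $\rho^*_{12}$: these pass through $R_{1221}$, whose expression \eqref{S-Rijkl} still carries the terms $\tfrac12\sinh2t\bigl\{g(\n_{e_1}\ta'^\s,e_1)+g(\n_{e_2}P\ta'^\s,e_1)\bigr\}+\tfrac12\ta'_1\ta'_2$. I expect this to be the only genuine obstacle, and it is resolved by observing that condition (1) forces $\ta'=0$ \emph{identically} on $\NN$ (not merely at one point), so that $\ta'^\s$ together with all its covariant derivatives vanishes and $R_{1221}$ collapses to $k'\cosh2t+1$. The converse implications are insensitive to these terms, which is exactly why the off-diagonal reading suffices there.
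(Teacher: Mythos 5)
Your proof is correct and follows essentially the same route as the paper, which simply reads the equivalences off the component formulas \eqref{S-rhotau}, \eqref{S-Rijkl} and \eqref{S-hijgij}; you have merely supplied the details the paper leaves implicit (the identification of condition (1) with $\ta'=0$ via \eqref{FW1} and \eqref{ta-14}, and the observation that the converse implications follow already from the off-diagonal entries $\rho_{0i}$, $\rho^*_{0i}$).
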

Let us remark that (1) of the latter theorem is equivalent to the fact that $(\mathcal{S}(\NN), \f, \xi, \eta, g)$ is an $\F_4$-manifold, according to (2) of \thmref{thm14}.

The expression of the Ricci tensor in (2) of \thmref{thm4} means that, in this special case, $(\mathcal{S}(\NN), \f, \xi,\A \eta, g)$ is an para-$\eta$-Einstein manifold following \cite{ManVes2}.

Then, we obtain immediately the following

\begin{cor}
If $(\NN, P,h)$ is a $\W_0$-manifold, then $(\mathcal{S}(\NN), \allowbreak{}\f, \xi, \eta, g)$ has the properties:
\begin{enumerate}
  \item $\tau=2(k'\cosh2t-1)$;
  \item $k'<0$ if and only if $\tau\leq 2(k'-1)< -2$;
  \item $k'=0$ if and only if $\tau=-2$;
  \item $k'>0$ if and only if $-2 < 2(k'-1)\leq \tau $.
\end{enumerate}
\end{cor}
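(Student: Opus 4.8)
The plan is to specialize the general Ricci data from \eqref{S-rhotau} to the case where $(\NN,P,h)$ is a $\W_0$-manifold and then read off the four equivalences directly. The key input is item (1) of \thmref{thm4}, which tells us the precise form of $\rho$ when $\n'P=0$; equivalently, I can return to the expression $R_{1221}=k'\cosh2t+1+\frac12\sinh2t\{g(\n_{e_1}\ta'^\s,e_1)+g(\n_{e_2}P\ta'^\s,e_1)\}+\frac12\ta'_1\ta'_2$ from \eqref{S-Rijkl} and observe that the $\W_0$-condition $\n'P=0$ forces the Lee form $\ta'$ to vanish (by \eqref{FW1}, since $F'\equiv0$). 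Hence $\ta'_1=\ta'_2=0$ and $\ta'^\s=0$, so all the correction terms drop and $R_{1221}=k'\cosh2t+1$.

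First I would substitute this simplified value of $R_{1221}$ into the scalar-curvature formula $\tau=2R_{1221}-4$ from \eqref{S-rhotau}. This gives
\begin{equation*}
\tau=2(k'\cosh2t+1)-4=2(k'\cosh2t-1),
\end{equation*}
which is exactly statement (1) of the corollary. Everything downstream is an elementary consequence of this single identity, so the substance of the proof is contained in establishing that the Lee-form corrections vanish under the $\W_0$-hypothesis.

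Next I would derive the three sign equivalences (2)--(4) from (1). Since $t\in\R^+$, we have $\cosh2t\geq 1$, with equality only in the boundary limit $t\to 0$; on $\R^+$ strictly $\cosh2t>1$. I would compare $2(k'\cosh2t-1)$ with $2(k'-1)$ by writing $\tau-2(k'-1)=2k'(\cosh2t-1)$, whose sign is governed entirely by the sign of $k'$ because $\cosh2t-1>0$. Thus: if $k'<0$ then $\tau<2(k'-1)<-2$ (note $2(k'-1)<-2$ is immediate from $k'<0$); if $k'=0$ then $\cosh2t$ disappears and $\tau=-2$ identically; and if $k'>0$ then $\tau>2(k'-1)>-2$. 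The inequalities in the statement are written with $\leq$ to accommodate the limiting value $\cosh2t=1$, so I would simply note that equality in each chain corresponds to that boundary case.

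The main obstacle, such as it is, is purely bookkeeping: one must confirm that the $\W_0$-condition genuinely annihilates the Lee-form terms in $R_{1221}$, rather than merely simplifying them. This follows because $(\NN,P,h)$ being a $\W_0$-manifold means $\n'P=0$, hence $F'=0$, and by \eqref{FW1} the Lee form $\ta'$ of a two-dimensional $\W_1$-manifold is determined by $F'$, so $\ta'\equiv0$. With $\ta'$ and its dual vanishing, every term in \eqref{S-Rijkl} beyond $k'\cosh2t+1$ is zero. Once this is in place the rest is a direct sign analysis of $2(k'\cosh2t-1)$ using $\cosh2t\geq1$, and no further curvature computation is needed.
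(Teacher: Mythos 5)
Your proof is correct and follows essentially the same route as the paper: the corollary is stated there as an immediate consequence of \thmref{thm4} and \eqref{S-rhotau}, i.e.\ one observes that the $\W_0$-condition $\n'P=0$ forces $F'=0$ and hence $\ta'=0$, so $R_{1221}$ reduces to $k'\cosh2t+1$ and $\tau=2R_{1221}-4=2(k'\cosh2t-1)$, after which the sign equivalences follow from $\cosh2t>1$ on $\R^+$ exactly as you argue. Your explicit verification that the Lee-form corrections vanish, and the identity $\tau-2(k'-1)=2k'(\cosh2t-1)$, are just the details the paper leaves implicit.
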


\bigskip

\subsection*{Acknowledgment}
The authors were supported by MU21-FMI-008 and the first author was partially supported by FP21-FMI-002, both projects
of the Scientific Research Fund,
University of Plovdiv Paisii Hilendarski, Bulgaria.

 \end{document}